\title{Infinite simple groups with no proper abnormal subgroups}
\author{Samuel M. Corson}
\address{E. T. S. I. I. Universidad Polit\'{e}cnica de Madrid, Jos\'{e} Guti\'{e}rrez Abascal 2, 28006 Madrid, Spain}
\email{sammyc973@gmail.com}
\theoremstyle{definition}\newtheorem{theorem}{Theorem}
\theoremstyle{definition}
\theoremstyle{definition}
\theoremstyle{definition}
\theoremstyle{definition}
\theoremstyle{definition}\newtheorem{proposition}[theorem]{Proposition}
\theoremstyle{definition}
\theoremstyle{definition}
\theoremstyle{definition}
\theoremstyle{definition}\newtheorem{remark}[theorem]{Remark}
\theoremstyle{definition}
\theoremstyle{definition}\newtheorem{lemma}[theorem]{Lemma}
\theoremstyle{definition}
\theoremstyle{definition}
\theoremstyle{definition}
\theoremstyle{definition}
\theoremstyle{definition}
\theoremstyle{definition}
\newcommand{\fin}{\operatorname{fin}}
\newcommand{\Po}{\mathcal{P}}
\begin{document}

\keywords{simple group, abnormal subgroup}

\subjclass[2020]{20E15, 20E32}

\thanks{The author is partially supported by RYC2023-045493-I.}

\begin{abstract}
Utilizing an embedding theorem of Obraztsov we construct groups as described in the title.  This provides an affirmative answer to a problem of D. O. Revin.  The constructed groups also provide a negative answer to a question highlighted by Kurdachenko, Russo, and Vincenzi.
\end{abstract}

\maketitle

\begin{section}{Introduction}

Recall that a subgroup $H$ of a group $G$ is \emph{abnormal} if $g \in \langle H \cup g^{-1}Hg \rangle$ for every $g \in G$.  Evidently an abnormal subgroup is very different from a normal subgroup; the whole group $G$ is the only subgroup of $G$ which is both normal and abnormal.  Such subgroups were studied by Philip Hall already in the 1930s \cite{Hall}, although the expression ``abnormal subgroup'' was coined much later by Carter \cite{Carter}.

Revin has asked whether there exists an infinite simple group that does not have proper abnormal subgroups \cite[Problem 19.74]{KhMaz}.  We show that there is such a group.

\begin{theorem}\label{thebigmain}  There exists an infinite group $H$ which

\begin{enumerate}

\item is simple;

\item is torsion-free;

\item has no maximal subgroup; and

\item has no proper abnormal subgroup.

\end{enumerate}

\noindent Furthermore, we can have $H$ satisfy either

\begin{enumerate}

\item[(a)] $|H| = \aleph_0$; or

\item[(b)] $|H| = \aleph_1$ and each proper subgroup of $H$ is countable.

\end{enumerate}
\end{theorem}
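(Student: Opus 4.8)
The plan is to reduce all four conclusions to a single structural feature of $H$ and then realize that feature through Obraztsov's embedding theorem. First I would record the one elementary implication among the conclusions: in an infinite simple group a maximal subgroup can never be normal (a proper nontrivial normal subgroup does not exist), so it is self-normalizing, and one checks directly that it is in fact abnormal---if $M$ is maximal and $g\notin M$ then $\langle M, M^{g}\rangle$ properly contains $M$, hence equals $H\ni g$. Thus conclusion (4) already forces (3), and it suffices to build an infinite, torsion-free, simple group $H$ with no proper abnormal subgroup and of the prescribed cardinality.

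The engine is an easy sufficiency criterion, which I would isolate as a lemma: \emph{if every proper subgroup of $H$ lies inside some proper subgroup $M$ with $N_{H}(M)\neq M$, then $H$ has no proper abnormal subgroup.} Indeed, given $A<H$, pick such an $M\supseteq A$ together with $g\in N_{H}(M)\setminus M$; then $A^{g}\le M^{g}=M$, so $\langle A, A^{g}\rangle\le M$, while $g\notin M$, whence $g\notin\langle A, A^{g}\rangle$ and $A$ fails the abnormality condition. The whole problem therefore becomes: produce a simple torsion-free $H$ in which every proper subgroup sits inside a proper, non-self-normalizing subgroup.

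This is precisely the kind of control Obraztsov's embedding theorem supplies. I would feed it a carefully chosen strictly increasing chain $G_{0}\le G_{1}\le\cdots$ of countable torsion-free groups with no greatest member, arranged so that each $G_{\lambda}$ is normalized by some element lying in a later term (so that $N_{H}(G_{\lambda})\supsetneq G_{\lambda}$, and likewise for conjugates). The theorem returns a simple, torsion-free $H$, generated by the factors, in which every proper subgroup is conjugate into one of the $G_{\lambda}$ (with, at worst, infinite cyclic subgroups as a separate alternative). Since $H=\bigcup_{\lambda}G_{\lambda}$, every element---hence every cyclic subgroup---already lies in a factor, so the cyclic alternative is absorbed and only conjugates $G_{\lambda}^{h}$ remain to serve as the witnessing $M$; as these are non-self-normalizing, the criterion yields conclusion (4). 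For the cardinalities I would use a chain of length $\omega$ for case (a), giving $|H|=\aleph_{0}$, and a chain $\{G_{\lambda}\}_{\lambda<\omega_{1}}$ of countable factors for case (b): because $\operatorname{cf}(\omega_{1})=\omega_{1}$, every countable subset of $H$ is contained in a single $G_{\lambda}$, so every proper subgroup is countable and $|H|=\aleph_{1}$.

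The main obstacle is the simultaneous satisfaction of two requirements that pull against each other: making $H$ \emph{simple} while keeping each factor $G_{\lambda}$ \emph{non-self-normalizing}. Simplicity demands a bookkeeping that, over the course of the construction, dissolves every candidate normal subgroup, so that no $G_{\lambda}$ persists as a normal subgroup of $H$; the criterion, by contrast, requires each $G_{\lambda}$ to be ascendant enough that its normalizer strictly grows at a later stage. Reconciling these---verifying that Obraztsov's theorem applies to such a chain (checking its hypotheses, e.g.\ the absence of involutions in the factors) and extracting from it exactly the ``every proper subgroup is conjugate into a factor'' dichotomy with the stray cyclic subgroups absorbed---is where the genuine work lies. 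Torsion-freeness and infinitude then follow for free from the theorem and from the torsion-free choice of factors.
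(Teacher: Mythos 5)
Your overall architecture matches the paper's: build $H$ as the union of a chain of countable torsion-free groups, arrange that every proper subgroup of $H$ lands inside a proper non-self-normalizing subgroup, and conclude non-abnormality exactly by the criterion you isolate (in the paper the witnessing $M$ is $H_{\gamma+1}\unlhd H_{\gamma+2}$ and $g$ is any element of $H_{\gamma+2}\setminus H_{\gamma+1}$). Your reduction of (3) to (4) is a nice extra the paper does not make explicit, though the one-line justification skips the case $M^{g}\leq M$: there you must note that $M^{g}$ is itself maximal, so $M^{g}=M$, hence $M\unlhd H$, which is impossible for a proper nontrivial maximal subgroup of an infinite simple group.

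The genuine gap sits precisely where you say ``the genuine work lies,'' and it is not merely a matter of checking hypotheses. You want simultaneously that (i) every proper subgroup of $H$ is conjugate into some $G_{\lambda}$ (an output of Obraztsov's theorem) and (ii) $H=\bigcup_{\lambda}G_{\lambda}$, so that the stray cyclic subgroups are absorbed into factors. These cannot both come from a single application of the embedding theorem: the theorem takes a \emph{free amalgam} of factors (pairwise trivially intersecting, not a nested chain) and returns a group that is generated by, but vastly larger than, the union of the factors; conversely, if $H$ really is the set-theoretic union of a chain, the theorem's subgroup dichotomy is not a statement about $H$ at all. What is needed, and what the paper supplies, is an \emph{iterated} application: each step produces $K_{0}<K_{1}<K_{2}$ with $K_{1}$ simple and normal in $K_{2}$, together with the two-generator condition $(*)_4$ that any $g\in K_{0}\setminus\{1\}$ and $h\in K_{2}\setminus K_{0}$ generate a subgroup containing $K_{1}$. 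That condition is then what powers Lemma \ref{tightbound}: a proper subgroup of a limit stage cannot meet $H_{\alpha+1}\setminus H_{\alpha}$ for unboundedly many $\alpha$, hence is trapped inside some $H_{\gamma}$, to which your criterion can finally be applied. Without $(*)_4$ or a substitute, a proper subgroup of the union could spread across all layers and escape every candidate $M$, and the argument collapses. This trapping mechanism is the idea missing from your proposal.
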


Part of the interest of such a group $H$ is that the only (ab)normal subgroups are the obvious ones.  It was an open question whether having no abnormal subgroups implies that a group is locally nilpotent \cite{KRV}.  The groups provided by Theorem \ref{thebigmain} are not locally nilpotent (see Remark \ref{notlocallynilpotent}), thus a negative answer to this question is established.

The group is constructed as a union of an increasing well-ordered chain of groups (the chain is of countable or uncountable length in the case of (a) or of (b), respectively).  Arbitrarily high in the chain there are simple groups which are normal in the next larger group, and any proper subgroup of $H$ includes into a group in the chain.  The proof follows a modification of a construction of Obraztsov \cite{Ob0} (see also \cite[Theorems 35.2, 35.3]{Ol}), using a consequence of his newer embedding theorem \cite{Ob} instead of his earlier \cite[Theorem 35.1]{Ol}.

We give some background and a lemma for induction in Section \ref{Prep} and then construct the groups in Section \ref{theconstuction}.

\end{section}

\begin{section}{Background Results}\label{Prep}

If $X$ is a set, let $\Po'(X) = \{Y \subseteq X \mid Y \neq \emptyset\}$ and $\Po_{\fin}'(X) = \{Y \in \Po'(X)| Y\text{ is finite}\}$.  We use the convention that a set is countable if it is either finite or countably infinite.  If $\{G_i\}_{i \in I}$ is a collection of groups let $\Omega^1$ denote the free amalgam of groups; that is, the union $\Omega^1 = \bigcup_{i \in I} G_i$ where $G_i \cap G_j = \{1\}$ whenever $i \neq j$.  Letting $\Omega = \Omega^1 \setminus \{1\}$ we say a function $f: \Po'(\Omega) \rightarrow \Po'(\Omega)$ is \emph{generating} if the following hold:

\begin{enumerate}

\item if $X \subseteq G_i$ for some $i \in I$ then $f(X) = \langle X \rangle \setminus \{1\}$;

\item if $X \in \Po_{\fin}'(\Omega)$ and $X \not\subseteq G_i$ for each $i \in I$ then $f(X) = Y$ is a countable subset of $\Omega$ such that $X \subseteq Y$ and for $Z \in \Po_{\fin}'(Y)$ we have $f(Z) \subseteq Y$;

\item if $X \in \Po'(\Omega)$ is infinite and $X \not\subseteq G_i$ for each $i \in I$ then $f(X) = \bigcup_{Y \in \Po_{\fin}'(X)} f(Y)$.
\end{enumerate}

Suppose $\phi: *_{i \in I} G_i \rightarrow G$ is a surjective homomorphism such that $\phi \upharpoonright \Omega^1$ is injective.  We shall in this case identify each $G_i$ as a subgroup of $G$.  We can write each element $g \in G$ as a \emph{reduced} word $W =_G g$ in the letters $\Omega$ (that is, if $W \equiv g_1g_2 \ldots g_k$ and if $g_s \in G_{i_s}$ then $i_s \neq i_{s + 1}$).  We say such a word $W$ is \emph{minimal} if whenever $g =_G U$, with $U$ a word in the letters $\Omega$, $U$ is at least as long as $W$.  If, in addition, $f: \Po'(\Omega) \rightarrow \Po'(\Omega)$ is generating we let $\mathcal{W}$ be the set of nonempty reduced words over the alphabet $\Omega$ and define a function $F: \Po'(\mathcal{W}) \rightarrow \Po'(\Omega)$ as follows:  for a nonempty subset $\mathcal{U} \subseteq \mathcal{W}$ we set $Y_{\mathcal{U}}$ to be the set of letters which occur in some word in $\mathcal{U}$ and let $F(\mathcal{U}) = f(Y_{\mathcal{U}})$.

For the following, see \cite[Theorem A]{Ob} (see \cite[Lemma 3]{Ob} for part (v)).

\begin{proposition}\label{beautifulOb}
Suppose the following.

\begin{enumerate}
\item[$(\dagger)_1$] $\{G_i\}_{i \in I}$ is a collection of nontrivial torsion-free groups, $\Omega^1$ is the free amalgam of these groups, and $i_0 \in I$;

\item[$(\dagger)_2$] $I_1 := I \setminus \{i_0\}$ has cardinality at least $2$;

\item[$(\dagger)_3$] $\Omega_1^1$ is the free amalgam of the collection $\{G_j\}_{j \in I_1}$;

\item[$(\dagger)_4$] $f: \Po'(\Omega) \rightarrow \Po'(\Omega)$ is generating, and if $Y \not\subseteq G_i$ for each $i \in I$ then $f(Y) \cap \Omega_1^1 \neq \emptyset$.

\end{enumerate}

\noindent Then there exists a torsion-free group $G$ and homomorphism $\phi: *_{i \in I} G_i \rightarrow G$ such that

\begin{enumerate}

\item[(i)] $\phi$ is surjective;

\item[(ii)] $\phi \upharpoonright \Omega^1$ is injective;

\item[(iii)] $\phi(\Omega_1^1)$ is a subset of a normal simple subgroup $L$ of $G$ such that $G$ is the semidirect product $G = L \rtimes G_{i_0}$;

\item[(iv)] if $M$ is a subgroup of $G$ then 

\begin{enumerate}

\item[$(o)_1$] $M$ is cyclic; or

\item[$(o)_2$] $M \cap L = \{1\}$; or

\item[$(o)_3$] $M$ is conjugate in $G$ to a subgroup $M_0 \leq G$ having a normal subgroup $L_0 \unlhd M_0$, such that

\begin{enumerate}

\item[$(t)_1$] $M_0/L_0$ is isomorphic to a subgroup of $G_{i_0}$;

\item[$(t)_2$] if we select for each $g \in L_0 \setminus \{1\}$ a minimal word $W_g =_G g$ over the elements of $\Omega$ and let $X = F(\{W_g\}_{g \in L_0 \setminus \{1\}})$ (or $X = \emptyset$ in case $L_0 = \{1\}$) then the inclusion $L_0 \leq \langle X \rangle \cap L$ holds and moreover if $X \not\subseteq G_i$ for each $i \in I$ then $L_0 = \langle X \rangle \cap L$ and $M_0 \leq \langle X \rangle$;

\end{enumerate}

\end{enumerate}

\item[(v)] if $i \in I$ and $g \in G \setminus G_i$ then for all $j \in I$ we have $G_j \cap g^{-1}G_ig = \{1\}$.

\end{enumerate}

\end{proposition}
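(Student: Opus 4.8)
Although Proposition \ref{beautifulOb} is quoted from \cite{Ob}, let me indicate the route by which a statement of this strength is proved, namely Olshanskii's theory of graded small cancellation over free products (the $A$-map/$B$-map machinery of \cite{Ol}). The plan is to realize $G$ as a graded quotient of the free product $P = *_{i \in I} G_i$, using the free amalgam $\Omega^1$ as alphabet and reading condition $(\dagger)_4$ on the generating function $f$ as the bookkeeping device that constrains which letters may appear in the relators we adjoin. Since $P$ is generated by the images of the $G_i$, surjectivity (i) is automatic for any such quotient; the content is to choose the relators so that everything else holds.

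First I would fix enumerations of the tasks the relators must accomplish: for (iii), pairs $(a,b)$ of nontrivial elements of the subgroup destined to become $L$, so that $b$ is forced into the normal closure of $a$; and whatever collapses are needed to pin down the quotient. Then I would build $G$ as the union of an increasing chain of surjections $P = G(0) \to G(1) \to \cdots$, where $G(n+1)$ is obtained from $G(n)$ by adjoining one relator $r_n$ of rank $n$. Each $r_n$ realizes a single simplicity requirement---expressing a prescribed nontrivial element as a product of conjugates of another---and is chosen long enough and generic enough that $\{r_m\}_{m \le n}$ satisfies the graded small cancellation condition relative to $G(n)$. The generating function $f$ enters here: it is used to select the conjugating letters so that the relators remain inside the evolving subgroup (and avoid essential use of $G_{i_0}$), which is exactly what lets the membership assertions of (iv)$(t)_2$ be read off later.

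The heart of the argument is the diagrammatic analysis over the resulting graded presentation. Because reduced van Kampen diagrams over a $C$-presentation are $A$-maps, they obey a Greendlinger-type lemma and a linear isoperimetric inequality, and from this one extracts the remaining clauses. No relator shortens a single-syllable word, giving (ii) and the fact that $G_{i_0}$ embeds. Torsion-freeness follows (given $(\dagger)_1$) by ruling out annular and disk diagrams witnessing $g^n =_G 1$ for nontrivial $g$, via periodicity estimates. For (iii), the normal closure $L$ of $\phi(\Omega_1^1)$ is simple because each nontrivial element generates the rest through the simplicity relators, while no relator essentially involves $G_{i_0}$, so $G/L$ is a quotient of $G_{i_0}$ and $G_{i_0} \cap L = \{1\}$ gives the splitting $G = L \rtimes G_{i_0}$. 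Clause (v) is a short-diagram argument: a nontrivial element of $G_j \cap g^{-1}G_i g$ would force a reduced diagram with no admissible cells, which the small cancellation geometry forbids. The subgroup classification (iv) is the most delicate: for a non-cyclic $M$ meeting $L$ nontrivially, one studies the geodesic words representing the elements of $M$, shows after conjugation that the letters occurring determine $M$ up to the stated structure, and identifies $L_0 = \langle X \rangle \cap L$ with $X = F(\{W_g\})$---precisely the output of feeding those letters through $f$.

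The main obstacle is sustaining the graded small cancellation estimates throughout the induction while \emph{simultaneously} forcing enough collapse to make $L$ simple. Each simplicity relator must have only small overlaps with every previously chosen relator---so that the $A$-map inequalities, and hence the subgroup control in (iv), survive---yet it must genuinely express one element as a product of conjugates of another and must respect the letter constraints imposed by $f$. Balancing ``few and generic enough'' against ``enough'' relations, and verifying the $A$-map conditions at each inductive step, is where essentially all of Obraztsov's work lies.
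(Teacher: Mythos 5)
The paper does not prove Proposition \ref{beautifulOb} at all: it is imported verbatim as a citation, ``see \cite[Theorem A]{Ob} (see \cite[Lemma 3]{Ob} for part (v)),'' so there is no in-paper argument to measure your attempt against. You have correctly recognized this and correctly located the machinery behind Obraztsov's theorem --- graded small cancellation over the free product $*_{i\in I} G_i$, van Kampen diagrams that are $A$-maps in the sense of \cite{Ol}, a rank-by-rank adjunction of relators forcing simplicity of the normal closure of $\Omega_1^1$ while leaving $G_{i_0}$ untouched so that $G = L \rtimes G_{i_0}$. That is an accurate description of where the result comes from and of why clause (iv) is the delicate part.

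As a proof, however, your text has a gap that is coextensive with the statement: no relators are written down, no small cancellation condition is formulated or verified, torsion-freeness and clause (v) are asserted to ``follow from the geometry'' without any diagram analysis, and the subgroup classification (iv) --- the only clause the paper actually uses, in the proof of Lemma \ref{inductivestep}, and the one whose precise form (the interplay between minimal words $W_g$, the function $F$, and the equality $L_0 = \langle X\rangle \cap L$ when $X \not\subseteq G_i$) carries all the content --- is described rather than derived. You acknowledge this yourself in the final paragraph. So the proposal should be read as an annotated pointer to \cite{Ob} and \cite[Ch.~35]{Ol}, which is exactly what the paper itself provides, and not as an independent verification; anyone wishing to check Proposition \ref{beautifulOb} must still read Obraztsov's proof.
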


From this powerful result we derive the following lemma which will be used in our inductive construction.

\begin{lemma}\label{inductivestep}
If $K_0$ is a countably infinite torsion-free group, then there exist groups $K_1$ and $K_2$ such that

\begin{enumerate}

\item[$(*)_1$] $K_2$ is countable and torsion-free;

\item[$(*)_2$] $K_i$ is a proper subgroup of $K_{i + 1}$ for $i = 0, 1$;

\item[$(*)_3$] $K_1$ is a simple normal subgroup of $K_2$; and

\item[$(*)_4$] if $g \in K_0 \setminus \{1\}$ and $h \in K_2 \setminus K_0$ then $\langle g, h \rangle \geq K_1$.

\end{enumerate}

\end{lemma}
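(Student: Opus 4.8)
The plan is to obtain $K_1$ and $K_2$ from a single application of Proposition \ref{beautifulOb}. I would take the index set $I = \{i_0, 1, 2\}$, so that $I_1 = \{1,2\}$ has cardinality $2$ as demanded by $(\dagger)_2$, set $G_1 = K_0$, and let $G_2$ and $G_{i_0}$ be copies of $\mathbb{Z}$ (any nontrivial countable torsion-free groups would serve). For the generating function I would use the ``collapsing'' choice
\[
f(X) =
\begin{cases}
\langle X \rangle \setminus \{1\} & \text{if } X \subseteq G_i \text{ for some } i \in I,\\
\Omega & \text{otherwise.}
\end{cases}
\]
A routine verification shows $f$ is generating: axioms (1)--(3) hold because any subset of $\Omega$ meeting two distinct factors already contains a two-element subset meeting two factors, on which $f$ returns all of $\Omega$; and $(\dagger)_4$ holds since $\Omega \cap \Omega_1^1 \neq \emptyset$. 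Writing $G, L, \phi$ for the output, I would set $K_1 = L$ and $K_2 = G$. Then $(*)_1$ is immediate ($G$ is torsion-free and is a quotient of a countable free product), $(*)_3$ is part (iii), and $(*)_2$ follows because $L \supseteq \phi(\Omega_1^1) \supseteq G_1 \cup G_2$ properly contains $G_1 = K_0$, while $G = L \rtimes G_{i_0}$ properly contains $L$.

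The content lies in $(*)_4$. Fix $g \in K_0 \setminus \{1\}$ and $h \in K_2 \setminus K_0$ and put $M = \langle g,h\rangle$. Since $G_1 \leq L$ we have $g \in M \cap L$, so $M \cap L \neq \{1\}$ and alternative $(o)_2$ of part (iv) fails. I would next exclude $M$ cyclic: if $M = \langle t\rangle$ then $g = t^a$ and $h = t^b$ with $a,b \neq 0$, and $t^{-1}gt = g$ exhibits a nontrivial element of $G_1 \cap t^{-1}G_1 t$; by the malnormality statement (v) this forces $t \in G_1$, whence $h = t^b \in G_1$, contradicting $h \notin K_0$. Thus $(o)_1$ also fails and $M$ falls under $(o)_3$: some conjugate $M_0 = x^{-1}Mx$ has a normal subgroup $L_0$ with $M_0/L_0$ embeddable in $G_{i_0} \cong \mathbb{Z}$. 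If $L_0 = \{1\}$ then $M_0$, hence $M$, would be cyclic, already excluded; so $L_0 \neq \{1\}$.

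It then remains to show $L_0 = L$, for then $L = L_0 \leq M_0$ and, $L$ being normal, $L = xLx^{-1} \leq M$, giving $(*)_4$. Let $Y$ be the set of letters occurring in the chosen minimal words for the elements of $L_0 \setminus \{1\}$, so that $X = f(Y)$ and $L_0 \leq \langle X\rangle \cap L$. The crux is to prove that $Y$ lies in no single factor $G_i$. If $Y \subseteq G_{i_0}$ then $L_0 \leq \langle X\rangle \cap L \leq G_{i_0} \cap L = \{1\}$, impossible. If $Y \subseteq G_j$ with $j \in I_1$, then $X = \langle Y\rangle \setminus \{1\} \subseteq G_j$ and $L_0 \leq G_j$; since $L_0 \unlhd M_0$, each $m \in M_0$ satisfies $\{1\} \neq L_0 \leq G_j \cap mG_jm^{-1}$, so (v) forces $m \in G_j$ and hence $M_0 \leq G_j$. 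Then $M \leq xG_jx^{-1}$ contains $g \in G_1 \setminus \{1\}$, so $G_1 \cap xG_jx^{-1} \ni g \neq 1$; by (v) this forces $x \in G_j$, and then $g \in G_1 \cap G_j$ forces $j = 1$, whence $M \leq G_1$ and $h \in G_1$, a contradiction. Therefore $Y$ meets two distinct factors, so $f(Y) = \Omega$ by construction, giving $\langle X\rangle = \langle \Omega\rangle = G$; the final clause of $(t)_2$ then yields $L_0 = \langle X\rangle \cap L = L$, as required.

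The step I expect to be the main obstacle is this last one: ruling out that the letter set $Y$ of $L_0$ sits inside a single factor. Everything else is bookkeeping with the hypotheses of Proposition \ref{beautifulOb}, but this case must be defeated by combining the normality $L_0 \unlhd M_0$ with the malnormality in (v) to push $M_0$ (and then $M$) entirely into one conjugate of a factor, contradicting the prescribed locations of $g$ and $h$. Once $Y$ is known to span two factors, the collapsing design of $f$ does the rest automatically.
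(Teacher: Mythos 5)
Your proposal is correct and follows essentially the same route as the paper: the same application of Proposition \ref{beautifulOb} with $K_0$ as one of the two factors forming $\Omega_1^1$, two copies of $\mathbb{Z}$ as the remaining factors, the same ``collapsing'' generating function, and the same endgame of forcing case $(o)_3$, showing $L_0$ is nontrivial, showing the letter set of $L_0$ is not confined to a single factor, and concluding $L_0 = L$ via $(t)_2$. The only (harmless) divergence is in the middle step: the paper first exhibits an element of $M_0$ outside all factors and conjugates $L_0$ by it, whereas you push $M_0$ entirely into one factor and contradict the locations of $g$ and $h$ --- both hinge on combining $L_0 \unlhd M_0$ with the malnormality statement (v).
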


\begin{proof}
Assume $K_0$ is a countably infinite torsion-free group.  We consider the collection of groups $\{G_0, G_1, G_2\}$ where $G_0 = K_0$, and both $G_1$ and $G_2$ are isomorphic to $\mathbb{Z}$.  As is done in \cite[Theorem D]{Ob} we let $f: \Po'(\Omega) \rightarrow \Po'(\Omega)$ be given by

\[
f(X) = \left\{
\begin{array}{ll}
\langle X \rangle \setminus \{1\}
                                            & \text{if } X \subseteq G_i \text{ for some }i \in I, \\
\Omega                                       & \text{otherwise}
\end{array}
\right.
\]

\noindent and note that $f$ is generating.  Now invoke Proposition \ref{beautifulOb} with $i_0 = 2$.  Let $K_2 = G$ and $K_1 = L$.  Properties $(*)_1$ - $(*)_3$ are clear.

We check property $(*)_4$.  Suppose $g \in K_0 \setminus \{1\}$ and $h \in K_2 \setminus K_0$ and let $M = \langle g, h\rangle$.  By condition (v) we have $\langle g \rangle \cap h^{-1}\langle g \rangle h \leq K_0 \cap h^{-1} K_0 h = \{1\}$, and so $M$ is not abelian and therefore not cyclic.  Also, $M \cap L \geq \langle g \rangle \neq \{1\}$.  Thus by property (iv) there exists some $h_0 \in K_2$  and $L_0$ such that $L_0 \unlhd M_0 = h_0^{-1} M h_0$ as in $(o)_3$.  Since $M_0/L_0$ is isomorphic to a subgroup of $G_2 \simeq \mathbb{Z}$, and $M_0 \simeq M$ is not abelian, we know that $L_0$ is not trivial.

We next note that $M_0 \setminus (G_0 \cup G_1 \cup G_2) \neq \emptyset$, which we check by cases.  If $h_0 \notin G_0$ we have $h_0^{-1}gh_0 \notin G_0 \cup G_1 \cup G_2$ (by condition (v)).  On the other hand, if $h_0 \in G_0$ we get $h_0^{-1}hh_0 \notin G_0$.  If $h_0^{-1}hh_0 \notin G_1 \cup G_2$ then we are done, but if say $h_0^{-1}hh_0 \in G_i$ then $(h_0^{-1}gh_0)^{-1}(h_0^{-1}hh_0)(h_0^{-1}gh_0) \notin G_0 \cup G_1 \cup G_2$ (again by (v)).  Thus $M_0 \setminus (G_0 \cup G_1 \cup G_2) \neq \emptyset$ holds in any case.

Next, we notice that $L_0 \not\subseteq G_i$ for $i = 0, 1, 2$.  We have already seen that $L_0$ is nontrivial, say $h_1 \in L_0 \setminus \{1\}$.  By way of contradiction if $L_0 \leq G_i$ then take $h_2 \in M_0 \setminus (G_0 \cup G_1 \cup G_2)$ and note that since $L_0$ is normal in $M_0$ we have $$h_1 \in L_0 \cap h_2^{-1}L_0h_2 \leq G_i \cap h_2^{-1} G_i  h_2= \{1\}$$ where the last equality is by condition (v), a contradiction.

For each $g_1 \in L_0 \setminus \{1\}$ let $W_{g_1}$ be a word of minimal length in the letters $\Omega$ representing $g_1$.  As $L_0 \not\subseteq G_i$ for $i = 0, 1, 2$ we know there are letters used in the words $W_{g_1}$ which come from distinct $G_i$, and therefore $F(\{W_{g_1}\}_{g_1 \in L_0 \setminus \{1\}}) = X = \Omega$.  Thus $L_0 = \langle X \rangle \cap L = L$ by $(t)_2$, and $M_0 \geq L_0 = L$.  Since $L$ is normal in $G$, we see that $M \geq L$, and the proof of $(*)_4$  is complete.

\end{proof}

\end{section}

\begin{section}{Construction and verification} \label{theconstuction}

We will make use of ordinal numbers (see \cite{J} for greater details). We follow the convention that an ordinal is the set of all ordinals that are below it (e.g. $4 = \{0, 1, 2 , 3\}$).  Thus $\alpha \in \beta$ if and only if the ordinal $\alpha$ is strictly below the ordinal $\beta$, and if $Z$ is a set of ordinals then the union $\bigcup Z$ is an ordinal (equal to the supremum of $Z$).  An ordinal $\alpha$ is \emph{limit} if $\alpha$ does not contain a maximal element (equivalently, $\alpha = \bigcup \alpha$).  A subset $Z \subseteq \alpha$ is \emph{unbounded} in limit ordinal $\alpha$ if $\bigcup Z = \alpha$.  Recall that each ordinal $\alpha$ can be written uniquely as $\alpha = \beta + n$, where $\beta$ is a limit ordinal and $n$ is a natural number.  Thus we consider $\alpha$ to be even (respectively odd ) if $n$ is even (resp. odd).

Let $\omega_1$ denote the first uncountable ordinal.  We build an increasing chain $(H_{\alpha}: \alpha \leq \omega_1)$ of infinite torsion-free groups.  Let $H_0$ be isomorphic to $\mathbb{Z}$.  Suppose that we have constructed groups $H_{\alpha}$ for all $\alpha < \beta \leq \omega_1$ which are countably infinite and torsion-free.  If $0 < \beta \leq \omega_1$ is a limit ordinal then let $H_{\beta} = \bigcup_{\alpha < \beta} H_{\alpha}$.

If $\beta < \omega_1$ is not limit and $\beta$ is odd then write $\beta = \alpha + 1$ (so $\alpha$ is even).  By Lemma \ref{inductivestep} take $H_{\alpha + 1}$ and $H_{\alpha + 2}$ such that 

\begin{itemize}

\item $H_{\alpha + 2}$ is countable and torsion-free;

\item $H_{\alpha + i}$ is a proper subgroup of $H_{\alpha + i + 1}$ for $i = 0, 1$;

\item $H_{\alpha + 1}$ is a simple normal subgroup of $H_{\alpha + 2}$;

\item if $g \in H_{\alpha} \setminus \{1\}$ and $h \in H_{\alpha + 2} \setminus H_{\alpha}$ then $\langle g, h \rangle \geq H_{\alpha + 1}$.
\end{itemize}

\noindent Note that this also covers the case when $\beta$ is even and not limit, so the chain $(H_{\alpha}: \alpha \leq \omega_1)$ is defined.

In what follows let $\mathbb{E} = \{\alpha < \omega_1 \mid \alpha \text{ is even}\}$ and $\mathbb{O} =\omega_1 \setminus \mathbb{E}$.

\begin{lemma}\label{tightbound}  Suppose $0 <\beta \leq \omega_1$ is a limit ordinal, $K \leq H_{\omega_1}$, and the set $$\{\alpha < \beta \mid K \cap (H_{\alpha + 1} \setminus H_{\alpha})\neq \emptyset\}$$ is unbounded in $\beta$.  Then $H_{\beta} \leq K$.

\noindent In particular, if $K$ is a proper subgroup of $H_{\beta}$ then $K \leq H_{\gamma}$ for some $\gamma < \beta$.
\end{lemma}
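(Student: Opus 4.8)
The engine of the proof is the last property produced by Lemma~\ref{inductivestep} and inherited by the chain: for every even ordinal $\eta < \omega_1$, if $g \in H_\eta \setminus \{1\}$ and $h \in H_{\eta+2} \setminus H_\eta$, then $\langle g, h \rangle \geq H_{\eta+1}$. The plan is to feed this property a single fixed ``anchor'' element of $K$ sitting low in the chain, together with a cofinal supply of elements of $K$ sitting genuinely above their own even level, and thereby capture $H_{\eta+1}$ inside $K$ for cofinally many $\eta < \beta$.

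First I would translate the hypothesis into a statement about even stages. Write $S = \{\alpha < \beta \mid K \cap (H_{\alpha+1} \setminus H_\alpha) \neq \emptyset\}$, and for each $\alpha \in S$ let $\eta = e(\alpha)$ be the largest even ordinal with $\eta \leq \alpha$ (so $e(\alpha) = \alpha$ or $e(\alpha) = \alpha - 1$). A short two-case check shows that any witness $k \in K \cap (H_{\alpha+1} \setminus H_\alpha)$ in fact lies in $H_{\eta+2} \setminus H_\eta$: if $\alpha = \eta$ this is immediate from $H_{\alpha+1} \subseteq H_{\alpha+2}$, and if $\alpha = \eta + 1$ it follows from $H_\eta \subseteq H_{\eta+1}$. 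Hence the set $T = \{e(\alpha) \mid \alpha \in S\}$ consists of even ordinals below $\beta$, is unbounded in $\beta$ (since $S$ is and $e(\alpha) \geq \alpha - 1$), and for each $\eta \in T$ we may fix some $h_\eta \in K \cap (H_{\eta+2} \setminus H_\eta)$.

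Now I would climb. Fix any $\eta_0 \in T$ and set $a = h_{\eta_0}$; note $a \neq 1$ because $1 \in H_{\eta_0}$. For every $\eta \in T$ with $\eta > \eta_0$ we have $\eta \geq \eta_0 + 2$ (both are even), so $a \in H_{\eta_0+2} \subseteq H_\eta$, i.e. $a \in H_\eta \setminus \{1\}$, while $h_\eta \in H_{\eta+2} \setminus H_\eta$. Applying property $(*)_4$ at the even level $\eta$ gives $H_{\eta+1} \leq \langle a, h_\eta \rangle \leq K$. Since $\{\eta + 1 \mid \eta \in T,\ \eta > \eta_0\}$ is cofinal in the limit ordinal $\beta$, for each $\delta < \beta$ I may choose such an $\eta > \max(\delta, \eta_0)$ and conclude $H_\delta \subseteq H_{\eta+1} \leq K$; taking the union over $\delta < \beta$ yields $H_\beta = \bigcup_{\delta < \beta} H_\delta \leq K$, as desired. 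For the ``in particular'' clause I would argue by contraposition together with a minimal-index argument: if $K \leq H_\beta$ is proper then $H_\beta \not\leq K$, so the main statement forces $S$ to be bounded, say $\sigma = \sup S < \beta$. Given $k \in K$, let $\delta_k$ be least with $k \in H_{\delta_k}$; since unions are taken at limits, $\delta_k$ is $0$ or a successor $\alpha + 1$, and in the latter case $k \in K \cap (H_{\alpha+1} \setminus H_\alpha)$ puts $\alpha \in S$, whence $\delta_k = \alpha + 1 \leq \sigma + 1$. Thus every $k \in K$ lies in $H_{\sigma+1}$, giving $K \leq H_\gamma$ with $\gamma = \sigma + 1 < \beta$.

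I expect the only real subtlety to be the reduction to even stages. Property $(*)_4$ is available only at even levels, and it demands that the element ``$g$'' sit below the stage $\eta$ at which $H_{\eta+1}$ is captured; the argument must therefore supply one fixed low anchor $a \in K$ alongside witnesses that genuinely escape their own even level. Lining up the indices correctly — that a witness for $\alpha$ lands in $H_{e(\alpha)+2} \setminus H_{e(\alpha)}$, and that the anchor $a$ drops below every later $\eta \in T$ — is where the care is needed; the conclusion and the ``in particular'' clause are then routine bookkeeping with the chain and cofinality.
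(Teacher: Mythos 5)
Your proposal is correct and follows essentially the same route as the paper: reduce the witnesses to even stages (your $e(\alpha)$ is the paper's $\alpha_2'$), apply property $(*)_4$ to a pair of elements of $K$ with one lying in $H_\eta\setminus\{1\}$ and the other in $H_{\eta+2}\setminus H_\eta$ for cofinally many even $\eta<\beta$, and bound a proper subgroup by the supremum of the witness set. The only cosmetic difference is that you fix a single anchor $a$ while the paper re-selects both elements for each target $\alpha_0$; this changes nothing substantive.
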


\begin{proof}
As $H_{\beta} = \bigcup_{\alpha < \beta} H_{\alpha}$ it is sufficient to show that for arbitrary $\alpha_0 < \beta$ we have $H_{\alpha_0} \leq K$.  For a fixed $\alpha_0$ we may select, by assumption, $\alpha_0 < \alpha_1 < \beta$ and $h_1 \in K \cap (H_{\alpha_1 + 1} \setminus H_{\alpha_1})$.  Select $h_2 \in K \cap (H_{\alpha_2 + 1} \setminus H_{\alpha_2})$ where $\alpha_1 + 2 \leq \alpha_2 < \beta$.  Take $\alpha_2' = \max\{\alpha \leq \alpha_2 \mid \alpha \in \mathbb{E}\}$, so either $\alpha_2' = \alpha_2$ or $\alpha_2' + 1 = \alpha_2$.  Now $h_1 \in H_{\alpha_2'} \setminus \{1\}$ and $h_2 \in H_{\alpha_2' + 2} \setminus H_{\alpha_2'}$, so $K \geq \langle h_1, h_2 \rangle \geq H_{\alpha_2' + 1} \geq H_{\alpha_0}$.

Supposing $K$ is a proper subgroup of $H_{\beta}$, the set $Z = \{\alpha < \beta \mid K \cap (H_{\alpha + 1} \setminus H_{\alpha})\neq \emptyset\}$ is bounded in $\beta$, say $\gamma' = \bigcup Z < \beta$, and we have $K \leq H_{\gamma' + 1}$ and $\gamma = \gamma' + 1 < \beta$.
\end{proof}

\begin{lemma}\label{verification}  If $0 < \beta \leq \omega_1$ is a limit ordinal then $H_{\beta}$ is an infinite group satisfying properties (1) - (4) in the statement of Theorem \ref{thebigmain}.
\end{lemma}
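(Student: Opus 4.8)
The plan is to run all four verifications off Lemma \ref{tightbound}, which tells me that every proper subgroup of $H_\beta$ sits inside some $H_\gamma$ with $\gamma < \beta$, together with three structural features of the chain: for even $\alpha$ the group $H_{\alpha+1}$ is \emph{simple} and normal in $H_{\alpha+2}$, each inclusion $H_\alpha \leq H_{\alpha+1}$ is proper, and, since $\beta$ is a limit ordinal, the even ordinals below $\beta$ are unbounded in $\beta$ (so also $\gamma < \beta$ implies $\gamma + 1 < \beta$). First I would dispatch infiniteness and property (2) at once: $H_\beta = \bigcup_{\alpha < \beta} H_\alpha$ is an increasing union of infinite torsion-free groups, hence is itself infinite and torsion-free.

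For simplicity (1) I would take a nontrivial $N \unlhd H_\beta$, fix $g \in N \setminus \{1\}$, and note $g \in H_{\alpha_0}$ for some $\alpha_0 < \beta$. For each even $\delta$ with $\alpha_0 \leq \delta < \beta$ I have $g \in H_\delta \leq H_{\delta+1}$, so $N \cap H_{\delta+1}$ is a nontrivial normal subgroup of the simple group $H_{\delta+1}$ and must equal $H_{\delta+1}$; in particular $H_\delta \subsetneq H_{\delta+1} \leq N$. As these $\delta$ are unbounded in $\beta$, the set $\{\delta < \beta \mid N \cap (H_{\delta+1}\setminus H_\delta) \neq \emptyset\}$ is unbounded, and Lemma \ref{tightbound} yields $H_\beta \leq N$, i.e. $N = H_\beta$.

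Property (3) I expect to be the quickest. If $M$ were a maximal subgroup of $H_\beta$, then being proper it lies in some $H_\gamma$ with $\gamma < \beta$ by Lemma \ref{tightbound}; but then $M \leq H_\gamma \subsetneq H_{\gamma+1} \subsetneq H_\beta$, the second inclusion being proper because $\gamma + 1 < \beta$, so $H_{\gamma+1}$ lies strictly between $M$ and $H_\beta$, contradicting maximality.

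The main obstacle is property (4), which needs one extra ingredient. I would first record the standard fact that any $K$ with $A \leq K \leq G$ and $A$ abnormal in $G$ is self-normalizing: if $g \in N_G(K)$ then $g^{-1}Ag \leq g^{-1}Kg = K$, whence $g \in \langle A \cup g^{-1}Ag \rangle \leq K$. Now suppose $A$ is a proper abnormal subgroup of $H_\beta$. By Lemma \ref{tightbound} $A \leq H_\gamma$ for some $\gamma < \beta$, and after replacing $\gamma$ by $\gamma + 1$ when $\gamma$ is even I may assume $\gamma$ is odd, so that $H_\gamma$ is simple and normal in $H_{\gamma+1}$ while still $\gamma + 1 < \beta$. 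The recorded fact applied to $A \leq H_\gamma$ gives $N_{H_\beta}(H_\gamma) = H_\gamma$; but $H_\gamma \unlhd H_{\gamma+1}$ forces $H_{\gamma+1} \leq N_{H_\beta}(H_\gamma) = H_\gamma$, contradicting $H_\gamma \subsetneq H_{\gamma+1}$. The two delicate points here are supplying the self-normalizing lemma for abnormal subgroups and performing the parity adjustment so that the containing group $H_\gamma$ is genuinely one of the chain members known to be normal in the next step.
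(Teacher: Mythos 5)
Your proof is correct, and parts (1)--(3) together with infiniteness follow essentially the paper's route: the paper also gets simplicity from the fact that $H_\beta$ is an increasing union of the simple groups $H_{\delta+1}$ ($\delta$ even), which you merely unfold by hand via Lemma \ref{tightbound}, and property (3) is verbatim the same argument. Where you genuinely diverge is property (4). The paper argues directly from the definition of abnormality: given a proper $K \leq H_\gamma$ with $\gamma$ even, it picks $h \in H_{\gamma+2}\setminus H_{\gamma+1}$ and observes that $\langle K \cup h^{-1}Kh\rangle \leq H_{\gamma+1}$ because $H_{\gamma+1}$ is normal in $H_{\gamma+2}$, so $h$ witnesses the failure of abnormality. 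You instead import the standard fact that any overgroup of an abnormal subgroup is self-normalizing, apply it to $K = H_\gamma$ with $\gamma$ odd, and contradict $H_\gamma \unlhd H_{\gamma+1}$ with $H_\gamma \neq H_{\gamma+1}$. Both arguments lean on the same structural input (a proper normal inclusion in the chain sitting above the given subgroup, available because $\beta$ is a limit); the paper's version is more self-contained since it needs nothing beyond the definition, while yours is slightly more conceptual and makes visible the general principle that abnormal subgroups obstruct proper normal inclusions above them. Your parity bookkeeping (every odd ordinal is $\alpha+1$ for an even $\alpha$, and $\gamma+1 < \beta$ since $\beta$ is a limit) is handled correctly, and the simplicity of $H_\gamma$ that you mention in step (4) is not actually needed there --- only the proper normality is used.
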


\begin{proof}
It is clear that $H_{\beta}$ is infinite since $H_{\beta} \geq H_0$.  We know that $H_{\beta}$ is (1) simple since $H_{\beta} = \bigcup_{\alpha < \beta} H_{\alpha} = \bigcup_{\alpha \in \beta \cap \mathbb{O}} H_{\alpha}$ and $H_{\alpha}$ is simple for $\alpha \in \beta \cap \mathbb{O}$.  That $H_{\beta}$ is (2) torsion-free is clear (by induction on $\alpha < \beta$).  If $K$ is a proper subgroup of $H_{\beta}$ then by Lemma \ref{tightbound} there is some $\gamma < \beta$ for which $K \leq H_{\gamma}$, and so $K$ is not maximal as $K \leq H_{\gamma} < H_{\gamma + 1} < H_{\beta}$ and property (3) holds.  For property (4), take $K$ to be a proper subgroup of $H_{\beta}$, select $\gamma < \beta$ for which $K \leq H_{\gamma}$ (without loss of generality $\gamma \in \mathbb{E}$) and pick $h \in H_{\gamma + 2} \setminus H_{\gamma + 1}$.  Since $H_{\gamma + 1} \unlhd H_{\gamma + 2}$ we see that $\langle K \cup h^{-1} K h\rangle \leq H_{\gamma + 1}$ and so $h \notin \langle K \cup h^{-1} K h\rangle$ and $K$ is abnormal in $H_{\beta}$.
\end{proof}

For (a) in Theorem \ref{thebigmain} we let $H = H_{\omega}$ (where $\omega$ is the smallest infinite ordinal).  For (b) we let $H = H_{\omega_1}$.  It is clear in this case that $|H| = \aleph_1$, and that every proper subgroup of $H$ is countable is immediate from Lemma \ref{tightbound} and the fact that $H_{\gamma}$ is countable for $\gamma < \omega_1$.

\begin{remark}\label{notlocallynilpotent}  It is easy to see that the constructed group $H$, either in the countable or in the uncountable case, is not locally nilpotent.  For example, taking $g \in H_0 \setminus \{1\}$ and $h \in H_2 \setminus H_0$ we know $H_1 \leq \langle g, h \rangle \leq H$.  The infinite simple group $H_1$ is not nilpotent, so neither is the subgroup $\langle g, h \rangle$.

\end{remark}

\end{section}

\end{document}